\topmargin -1.5cm \textwidth 6in \textheight 8.5in
\documentclass[12pt]{amsart}
\usepackage{amssymb}
\usepackage{}
\usepackage{cases}

\usepackage{amsmath}
\usepackage{txfonts}
\theoremstyle{plain}
\newtheorem{Thm}{Theorem}

\newtheorem{Lem}[Thm]{Lemma}

\errorcontextlines=0

\begin{document} 
\title[Brezis theorem for Fractional Laplacian]
{Boundeness of solutions to Fractional Laplacian Ginzburg-Landau equation}

\author{Li Ma}

\address{Li Ma: Zhongyuan Institute of mathematics and Department of mathematics \\
Henan Normal university \\
Xinxiang, 453007 \\
China} \email{lma@tsinghua.edu.cn}

\dedicatory{}
\date{May 26th, 2015}

\begin{abstract}

In this paper, we give the boundeness of solutions to Fractional Laplacian Ginzburg-Landau equation, which extends the Brezis theorem into the nonlinear Fractional Laplacian equation. A related linear fractional Schrodinger equation is also studied.

{\textbf{Mathematics Subject Classification} (2000): 35J60,
53C21, 58J05}

{\textbf{Keywords}: Brezis theorem, Ginzburg-landau equation, fractional Laplacian}
\end{abstract}

\thanks{$^*$ The research is partially supported by the National Natural Science
Foundation of China (No. 11271111) and SRFDP 20090002110019. }
\maketitle

\section{Introduction}\label{sect1}
In this paper, we continue our study of nonlocal nonlinear elliptic problem with the fractional Laplacian \cite{M}. We give the boundeness of solutions to Fractional Laplacian Ginzburg-Landau equation, which extends the Brezis theorem \cite{B} \cite{M1}\cite{M2}\cite{MW} into the nonlinear Fractional Laplacian equation. The proof of our result depends on a Liouville type theorem for $L^p$ non-negative solutions to a nonlinear fractional Laplacian inequality.

We begin with the definition of fractional Laplacian on $R^n$. Let $0<\alpha<2$. Following \cite{CS} we define
$$E=C^{1,1}_{loc}(R^n)\bigcap L_\alpha,$$
 where
$$
L_\alpha=\{u\in L^1_{loc}(R^n); \int_{R^n}\frac{|u(x)|dx}{1+|x|^{n+\alpha}}<\infty \}.
$$
 For $u\in E$, we define the fractional Laplacian operator $(-\Delta)^{\alpha/2}$ by
$$
(-\Delta)^{\alpha/2}u(x)=C_{n,\alpha}\int_{R^n}\frac{u(x)-u(y)}{|x-y|^{n+\alpha}}dy,
$$
where $C_{n,\alpha}$is the uniform constant \cite{La}. The fractional Ginzburg-Landau equation is
\begin{equation}\label{GL}
(-\Delta)^{\alpha/2}u=u(1-|u|^2),  \ \ \  in \ \ R^n.
\end{equation}

We consider the physical meaningful solutions and our main result is below.
\begin{Thm}\label{main} Let $u\in E$ is a solution to (\ref{GL}) such that
$$
\int_{R^n}(1-|u|^2)^2dx<\infty.
$$
Then, we have
$$
|u(x)|\leq 1, \ \ in \ \ R^n
$$
\end{Thm}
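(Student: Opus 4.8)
The plan is to follow the strategy of Brezis's original argument, but to replace every pointwise product- and chain-rule computation, which fails for the nonlocal operator, by the corresponding Kato-type inequality for $(-\Delta)^{\alpha/2}$. Write $v=|u|^2-1$, so that the energy hypothesis reads $\int_{R^n}v^2\,dx<\infty$ and the conclusion $|u|\le 1$ is equivalent to $v\le 0$, i.e.\ to the vanishing of the positive part $g:=v^+=\max(|u|^2-1,0)$.

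First I would establish the elementary pointwise identity
\[
|u(x)|^2-|u(y)|^2=2\,u(x)\cdot(u(x)-u(y))-|u(x)-u(y)|^2,
\]
multiply it by the positive kernel $C_{n,\alpha}|x-y|^{-n-\alpha}$ and integrate in $y$. Since the last term contributes a non-negative quantity, this gives the fractional chain-rule inequality $(-\Delta)^{\alpha/2}(|u|^2)\le 2\,u\cdot(-\Delta)^{\alpha/2}u$. Inserting the equation (\ref{GL}) we have $u\cdot(-\Delta)^{\alpha/2}u=|u|^2(1-|u|^2)$, and hence, in terms of $v$,
\[
(-\Delta)^{\alpha/2}v\le 2|u|^2(1-|u|^2)=-2v(1+v).
\]
Here I must first check that $|u|^2\in E$, so that all the integrals converge and the manipulation is legitimate: local $C^{1,1}$ regularity is automatic, and $|u|^2=1+v\in L_\alpha$ follows from $v\in L^2$ by Cauchy--Schwarz against $(1+|x|^{n+\alpha})^{-1}$. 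This is routine but necessary.

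Next I would pass to the positive part. Using the Kato inequality for the positive part, $(-\Delta)^{\alpha/2}v^+\le \chi_{\{v>0\}}(-\Delta)^{\alpha/2}v$ (which again follows from $v^+(x)-v^+(y)\le v(x)-v(y)$ on $\{v(x)>0\}$ together with positivity of the kernel), and noting $1+v=|u|^2>0$ where $v>0$, I obtain
\[
(-\Delta)^{\alpha/2}g\le -2g(1+g)\le -2g^2\le 0,\qquad g\ge 0,\quad g\in L^2(R^n).
\]
Thus $g$ is a non-negative $L^2$ function satisfying a nonlinear fractional Laplacian inequality, exactly the setting of the Liouville type theorem.

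Finally I would invoke that Liouville theorem to conclude $g\equiv 0$. At the level of formal computation this is transparent: testing $(-\Delta)^{\alpha/2}g\le -2g^2$ against $g$ gives $\|(-\Delta)^{\alpha/4}g\|_{L^2}^2\le -2\int_{R^n}g^3\,dx\le 0$, forcing $\int g^3=0$ and hence $g\equiv 0$, i.e.\ $|u|\le 1$. The main obstacle is making this last step rigorous for a merely $L^2$ function on all of $R^n$: the self-adjoint energy identity $\int g\,(-\Delta)^{\alpha/2}g=\|(-\Delta)^{\alpha/4}g\|_{L^2}^2$ must be justified through a cut-off argument, and because $(-\Delta)^{\alpha/2}$ is nonlocal the tail and commutator terms produced by a cut-off $\eta_R$ are harder to control than in the local Brezis argument, requiring the $L^2$ decay of $v$ to absorb them as $R\to\infty$. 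This nonlocal Liouville estimate is the technical heart of the proof, while everything preceding it is the nonlocal bookkeeping needed to reduce to it.
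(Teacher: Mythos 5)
Your reduction to the Liouville step is exactly the paper's: the same decomposition $Q=|u|^2-1$, the same pointwise identity yielding $(-\Delta)^{\alpha/2}|u|^2\le 2u\cdot(-\Delta)^{\alpha/2}u$, and the same Kato-type passage to the positive part, arriving at $(-\Delta)^{\alpha/2}Q_+\le -2Q_+^2$ with $0\le Q_+\in L^2(R^n)$. Where you genuinely diverge is in how you kill $Q_+$. The paper proves a separate lemma, valid for any $0\le f\in L^q$ ($1\le q<\infty$) with $(-\Delta)^{\alpha/2}f+f^r\le 0$ in the distributional sense, by a duality trick: it tests against $\phi=g_\alpha * \xi_R$, the Riesz potential of a cut-off, so that $(-\Delta)^{\alpha/2}\phi=\xi_R\ge 0$ and $\phi\ge 0$; the weak inequality then reads $\int f\xi_R+\int f^r\phi\,\xi_R\le 0$ with every term non-negative, forcing $f\equiv 0$ on $B_R$ for each $R$. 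Note that this argument is purely sign-based (the nonlinearity enters only through $f^r\ge 0$), works for every $q$ and $r$, and never touches the quadratic form; its cost is the density argument needed to admit the non-compactly-supported test function $\phi$. Your route instead pairs the inequality with $g=Q_+$ itself, and is tied to $q=2$ --- which is all the theorem needs. The ``technical heart'' you flagged does go through: testing with $g\eta_R^2$ and using the exact algebraic identity $(a-b)(a\sigma^2-b\tau^2)=(a\sigma-b\tau)^2-ab(\sigma-\tau)^2$ one obtains
$$
\iint \frac{\bigl(g\eta_R(x)-g\eta_R(y)\bigr)^2}{|x-y|^{n+\alpha}}\,dx\,dy+c\int g^3\eta_R^2\,dx\;\le\;C\iint g(x)g(y)\frac{(\eta_R(x)-\eta_R(y))^2}{|x-y|^{n+\alpha}}\,dx\,dy\;\le\;C'R^{-\alpha}\|g\|_{L^2}^2,
$$
the last step by Young's inequality, since $(\eta_R(x)-\eta_R(y))^2\lesssim\min(|x-y|^2/R^2,1)$ and the kernel $\min(|z|^2/R^2,1)\,|z|^{-n-\alpha}$ has $L^1$-norm $O(R^{-\alpha})$; letting $R\to\infty$ forces $g\equiv 0$. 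So both routes are valid: the paper's buys generality in $q$ and $r$ and a softer, purely linear argument (at the price of a test-function approximation step), while yours buys a self-contained $L^2$ energy proof at the price of the nonlocal commutator estimate above.
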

This is an extension of Brezis theorem \cite{MW} about the Ginzburg-Landau equation/system. The result is also true for the corresponding vector-valued solution $u:R^n\to R^N$. It is quiet possible to remove the condition $1-u^2\in L^2$. However, we can give an example of linear fractional Schrodinger equation, which shows that the behavior of solutions to linear equation is also very subtle.

Assume $k(x)\geq 0$ is non-negative smooth function on $R^n$. Let $g_\alpha(x,y)=C_{n,-\alpha}\frac{1}{|x-y|^{n-\alpha}}$.
We also consider non-negative solutions to the following linear fractional Laplacian equation
\begin{equation}\label{nonlocal}
(-\Delta)^{\alpha/2}u+k(x)u=0,  \ \ \  in \ \ R^n.
\end{equation}

We have the below
\begin{Thm}\label{ma} Assume $k(x)\geq 0$ is a nontrivial non-negative smooth function on $R^n$. Assume that for each $x\in R^n$,
\[\label{assum}
\int_{R^n} g_\alpha(x,y)k(x)dy<\infty.
\]
Then there
is a non-trivial non-negative solution to (\ref{nonlocal}).
\end{Thm}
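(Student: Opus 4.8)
The plan is to construct the solution by solving the fractional Dirichlet problem on large balls with exterior datum $1$ and passing to the limit, using the finiteness of the Riesz potential to prevent the limit from collapsing to zero. Throughout write $G_k(x)=\int_{R^n} g_\alpha(x,y)k(y)\,dy$; the hypothesis (\ref{assum}), read with $k(y)$ rather than $k(x)$ in the integrand (as the statement requires, since $\int_{R^n}g_\alpha(x,y)\,dy=\infty$), says precisely that this $\alpha$-Riesz potential is finite at every point. Since $g_\alpha$ is the fundamental solution one has $(-\Delta)^{\alpha/2}G_k=k\ge0$, so $G_k$ is a nonnegative $\alpha$-superharmonic function.

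First I would, for each $R>0$, solve
\[
(-\Delta)^{\alpha/2}u_R+k u_R=0 \ \ \text{in}\ \ B_R,\qquad u_R=1 \ \ \text{in}\ \ R^n\setminus B_R .
\]
Because $k\ge0$ the form of $(-\Delta)^{\alpha/2}+k$ is coercive, so this exterior-data problem has a unique solution, smooth by the smoothness of $k$. The constant $1$ is a supersolution and $0$ a subsolution, so the comparison principle gives $0\le u_R\le 1$ and the strong maximum principle gives $u_R>0$ in $B_R$. Comparing the data on nested balls shows $R\mapsto u_R$ is nonincreasing, hence $u_R\downarrow u$ pointwise with $0\le u\le1$. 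Since the $u_R$ are uniformly bounded and $k$ is locally bounded, interior estimates for the fractional Laplacian make the family locally equicontinuous, so the convergence is locally uniform; passing to the limit in the pointwise definition of $(-\Delta)^{\alpha/2}$ by dominated convergence shows that $u$ solves (\ref{nonlocal}) on all of $R^n$ and lies in $E$.

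The quantitative input is the Green representation on the ball: if $G_R(x,y)$ is the Green function of $(-\Delta)^{\alpha/2}$ on $B_R$, then $u_R(x)=1-\int_{B_R}G_R(x,y)k(y)u_R(y)\,dy$, because the $\alpha$-harmonic extension of the exterior datum $1$ is again $1$. Using $0\le G_R\le g_\alpha$ together with $0\le u_R\le1$ and $k\ge 0$ gives the uniform lower bound $u_R(x)\ge 1-G_k(x)$, and letting $R\to\infty$ yields $u(x)\ge (1-G_k(x))_+\ge0$. Thus $u$ is nontrivial as soon as $G_k(x)<1$ at a single point.

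The main obstacle is exactly this nontriviality, i.e. showing $\inf_{R^n}G_k<1$, and I would prove the stronger statement $\liminf_{|x|\to\infty}G_k(x)=0$. Setting $F(\rho)=\int_{|y|\le\rho}k(y)\,dy$, finiteness of $G_k(0)$ is equivalent to $\int_{R^n}\frac{k(y)}{1+|y|^{n-\alpha}}\,dy<\infty$, which by Fubini forces $\int_1^\infty \rho^{-(n-\alpha+1)}F(\rho)\,d\rho<\infty$ and hence $\liminf_{\rho\to\infty}\rho^{-(n-\alpha)}F(\rho)=0$ (otherwise the integrand would dominate $c/\rho$). Averaging $G_k$ over the annulus $\{r\le|x|\le2r\}$ and splitting the $y$-integral at $|y|=4r$ bounds this average by $C\,r^{-(n-\alpha)}F(4r)+C\int_{|y|>4r}\frac{k(y)}{|y|^{n-\alpha}}\,dy$; along a sequence $r_j\to\infty$ realizing the liminf, both terms tend to $0$, so $G_k$ takes values below $1$ arbitrarily far out. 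Evaluating $u(x)\ge(1-G_k(x))_+$ at such points gives $u\not\equiv0$, completing the proof. (One may instead quote the potential-theoretic fact that an $\alpha$-Riesz potential finite somewhere has greatest $\alpha$-harmonic minorant $0$.) An alternative to this last step is to renormalize $v_R=u_R/u_R(0)$ and apply the fractional Harnack inequality to the nonnegative $v_R$ to extract a nontrivial limit, trading the elementary potential estimate for a heavier nonlocal Harnack input.
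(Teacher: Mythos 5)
Your construction is essentially the paper's own: exhaust $R^n$ by balls, solve the exterior-value problem with datum $1$ outside, use $0\le u_R\le 1$ and monotonicity in $R$ to pass to a decreasing limit $u$, and bound $u$ from below by $1$ minus the Riesz potential of $k$. The only cosmetic difference in that part is how the lower bound is obtained: you get $u_R\ge 1-G_k$ from the Green representation $u_R=1-\int_{B_R}G_R(\cdot,y)k(y)u_R(y)\,dy$ together with $G_R\le g_\alpha$, while the paper applies the maximum principle to $\tilde u_j=1-u_j$, which satisfies $(-\Delta)^{\alpha/2}\tilde u_j=k u_j\le k$ with zero exterior data, and compares it with the potential $V=G_k$; these are equivalent. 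Where you genuinely go beyond the paper is the nontriviality step: the paper simply asserts $\inf_{R^n}V=0$ (written there as ``$\inf_{R^n}V(0)=0$'') with no justification, whereas you prove it, showing that finiteness of the Riesz potential at a single point forces $\liminf_{|x|\to\infty}G_k(x)=0$. Your argument for this is correct: integration by parts turns $\int_{|y|\ge 1}k(y)|y|^{\alpha-n}\,dy<\infty$ into $\int_1^\infty \rho^{-(n-\alpha+1)}F(\rho)\,d\rho<\infty$, hence $\liminf_{\rho\to\infty}\rho^{-(n-\alpha)}F(\rho)=0$, and the annulus average of $G_k$ split at $|y|=4r$ is controlled by $C r^{-(n-\alpha)}F(4r)$ plus a tail of a convergent integral, both of which vanish along the right sequence. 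This makes the proof self-contained exactly where the paper leans on an unproved potential-theoretic fact, and it also matches the paper's closing remark that finiteness of (\ref{assum}) at one point suffices. Finally, you are right that the hypothesis must be read with $k(y)$ in the integrand; the paper's $k(x)$ is a typo, since otherwise the integral in (\ref{assum}) diverges for every nontrivial $k$.
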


 One remark is given now. We actually only need to assume (\ref{assum}) is true at some point $x$.

 The plan of this note is below. The proof of Theorem \ref{ma} is given in section \ref{sect2} and Theorem \ref{main} is proven in section \ref{sect3}.

\section{Linear equation}\label{sect2}

We now prove Theorem \ref{ma}. Assume (\ref{assum}). We let, for each $x\in R^n$,
\[\label{assum2}
V(x)=\int_{R^n} g_\alpha(x,y)k(x)dy<\infty.
\]
Then $V(x)$ is the minimum non-negative solution to the Poisson equation
$$
(-\Delta)^{\alpha/2}u=k(x), \ \ in \  \ R^n
$$
and we have $\inf_{R^n}V(0)=0$. Let $R^n=\bigcup_{j\geq 1} B_j(0)$ be a ball exhaustion of $R^n$. We denote by $B_j=B_j(0)$.
We solve $u_j(x)\geq 0$ such that
$$
(-\Delta)^{\alpha/2}u_j+k(x)u_j=0, \ \ \ in \ \ B_j
$$
with the boundary condition $u_j=1$ on $B_j^c=R^n-B_j$. By the Maximum principle \cite{CF} \cite{CL} we have $0\leq u_j(x)\leq 1$ on $R^n$. This solution can be obtained by the variation method or the monotone method.
By the comparison lemma we know that $(u_j(x))$ is monotone non-increasing sequence and we may let
$$
U(x)=\lim_{j\to\infty} u_j(x).
$$
Note that $0\leq U(x)\leq $ on $R^n$. We now show that $U$ is non-trivial. Let $\tilde{u}_j=1-u_j$. Then
$$
(-\Delta)^{\alpha/2}\tilde{u}_j=k(x)u_j(x)\leq k(x), \ \ in \ \ B_j
$$
and $\tilde{u}_j(x)=0$ on $B_j^c$. By the Maximum principle we have $\tilde{u}_j(x)\leq V(x)$ on $R^n$. Passing to limit we have
$$
1-U(x)\leq V(x), \ \ on \ \ R^n.
$$
Since $\inf V(x)=0$, we know that $U(x)$ is a non-trivial non-negative solution to (\ref{nonlocal}). This completes the proof of Theorem \ref{ma}.

\section{Proof of Theorem \ref{main}}\label{sect3}

Recall that we have Kato's inequality of the form \cite{FLS}
$$
(-\Delta)^{\alpha/2}|f|(x)\leq sgn(f)(-\Delta)^{\alpha/2}f(x), \  \ a.e. \ \ R^n.
$$
By this we have for any $f\in E$, we have
$$
(-\Delta)^{\alpha/2}f_+(x)\leq sgn(f_+)(-\Delta)^{\alpha/2}f(x), \  \ a.e. \ \ R^n,
$$
where $f_+(x)=\sup(f(x),0)$.

Let $u\in E$ be a solution to (\ref{GL}) such that
$$
\int_{R^n}(1-|u|^2)^2dx<\infty.
$$
Let
$$
Q(x)=|u(x)|^2-1.
$$
Note that
$$
(-\Delta)^{\alpha/2}u^2(x)=C_{n,\alpha}\int_{R^n} \frac{u^2(x)-u^2(y)}{|x-y|^{n+\alpha}}dy
$$
$$
\ \ \ \ \ \ \ \ \ \ \ \ \ \ \ \ =2u(x)(-\Delta)^{\alpha/2}u(x) -C_{n,\alpha}\int_{R^n}\frac{|u(x)-u(y)|^2}{|x-y|^{n+\alpha}}dy.
$$
Then
$$
(-\Delta)^{\alpha/2}u^2(x)\leq 2u(x)(-\Delta)^{\alpha/2}u(x).
$$
By the equation (\ref{GL}) we have
$$
(-\Delta)^{\alpha/2}u^2(x)\leq -2u^2(x)Q(x)=-2Q^2(x)-2Q(x).
$$
That implies that
$$
(-\Delta)^{\alpha/2}Q(x)\leq -2Q^2(x)-2Q(x).
$$

Using the Kato inequality above we have
$$
(-\Delta)^{\alpha/2}Q_+(x)\leq -2Q_+^2(x).
$$
Invoking lemma \ref{Lem1} below (with $q=2$) we can conclude that $Q_+(x)=0$ on $R^n$, which implies Theorem \ref{main}.

\begin{Lem}\label{Lem1} Let $1\leq r<\infty$.
Assume that $0\leq f\in L^q(R^n)$ for some $1\leq q<\infty$ such that
$$
(-\Delta)^{\alpha/2}f+f^{r}\leq 0, \ \ \ in \ \ R^n,
$$
in the distributional sense, i.e.,
\[\label{weak}
\int_{R^n} f(-\Delta)^{\alpha/2}v+\int_{R^n}f^{r}v\leq 0,
\]
for any $v\in C_0^\infty(R^n)$ with $v\geq 0$,
then we have $f=0$ on $R^n$.
\end{Lem}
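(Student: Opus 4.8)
The plan is to prove that an $L^q$ nonnegative distributional subsolution of $(-\Delta)^{\alpha/2}f + f^r \le 0$ must vanish, via a test-function / cutoff argument exploiting the integrability $f \in L^q$. The core idea: if $f$ were nontrivial, the inequality $\int f^r v \le -\int f (-\Delta)^{\alpha/2} v = \int f (\Delta)^{\alpha/2}... $ forces a contradiction once we choose the test function $v$ to be a suitable cutoff concentrated at scale $R$ and let $R\to\infty$. Let me sketch the steps.

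First I would set up a family of cutoffs. Fix a smooth $\phi\in C_0^\infty(R^n)$ with $0\le\phi\le 1$, $\phi\equiv 1$ on $B_1$ and $\phi\equiv 0$ outside $B_2$, and let $\phi_R(x)=\phi(x/R)$. The natural test function is $v=\phi_R^m$ for a power $m$ chosen large enough (depending on $r$) to absorb derivatives. Plugging $v=\phi_R^m$ into the weak formulation \eqref{weak} gives
\[
\int_{R^n} f^r \phi_R^m \;\le\; -\int_{R^n} f\,(-\Delta)^{\alpha/2}(\phi_R^m).
\]
The main analytic input is the scaling/decay estimate for the fractional Laplacian of the cutoff: one shows $|(-\Delta)^{\alpha/2}(\phi_R^m)(x)| \le C R^{-\alpha}\,\psi_R(x)$ for an appropriate nonnegative $\psi_R$ supported essentially in $B_{2R}$, obtained by the change of variables $y\mapsto Ry$ in the singular integral defining $(-\Delta)^{\alpha/2}$.

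Next I would estimate the right-hand side by Hölder's inequality, splitting the integrand as $f\cdot (-\Delta)^{\alpha/2}(\phi_R^m)$ and pairing $f^? \phi_R^{?}$ with the remaining factor so that one piece reconstructs $\bigl(\int f^r\phi_R^m\bigr)^{1/r}$ (the same quantity as on the left) and the other collects the $R^{-\alpha}$ gain together with a volume factor $|B_{2R}|^{1-1/r'}\sim R^{n(1-1/r)}$ or, better, a factor controlled by the tail $\bigl(\int_{B_{2R}\setminus B_R} f^q\bigr)^{1/q}$ coming from $f\in L^q$. Writing $J(R)=\int f^r\phi_R^m$, the inequality then reads schematically $J(R)\le C\,J(R)^{1/r}\,R^{-\alpha}\,(\text{volume or tail factor})$, i.e. $J(R)^{1-1/r}\le C R^{-\alpha}(\cdots)$. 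The delicate point is to choose the exponents so the prefactor tends to $0$; when $f\in L^q$ the tail $\int_{B_{2R}\setminus B_R} f^q\to 0$, which is exactly the leverage the hypothesis $f\in L^q$ provides and which forces $J(R)\to 0$, hence $\int_{R^n} f^r=0$ and $f\equiv 0$.

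The hard part will be the bookkeeping of exponents: one must verify that $q,r,\alpha,n$ combine so that the decay factor genuinely vanishes (choosing $m$ large enough to make the cutoff estimate clean, and handling the gap region $B_{2R}\setminus B_R$ where $(-\Delta)^{\alpha/2}\phi_R^m$ is supported together with the nonlocal contributions from $x$ inside $B_R$ seeing the transition at scale $R$). A subtle technical issue specific to the fractional setting is that $(-\Delta)^{\alpha/2}(\phi_R^m)$ is \emph{not} compactly supported and decays only like $|x|^{-n-\alpha}$ at infinity, so the pairing $\int f\,(-\Delta)^{\alpha/2}(\phi_R^m)$ picks up a global tail; controlling this tail against $f\in L^q$ (via the assumption $f\in L_\alpha$ implicit in $f\in E$, or directly from $L^q$ integrability) is where care is needed. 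Once the decay estimate and the Hölder split are arranged so that the prefactor $\to 0$, the conclusion $f\equiv 0$ is immediate.
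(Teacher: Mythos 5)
Your plan hinges on the deferred ``bookkeeping of exponents,'' and that is exactly where the argument breaks: it cannot close in the generality of the lemma. Run the numbers. With $v=\phi_R^m$, note $\phi_R^m\equiv 1$ on $B_R$ and $\phi_R^m\le 1$ everywhere, so $(-\Delta)^{\alpha/2}\phi_R^m\ge 0$ on $B_R$; the only harmful contribution to $-\int f(-\Delta)^{\alpha/2}\phi_R^m$ therefore comes from $|x|\ge R$, where $\bigl[(-\Delta)^{\alpha/2}\phi_R^m\bigr]_-\lesssim \min\bigl(R^{-\alpha},\,R^n|x|^{-n-\alpha}\bigr)$. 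Whichever way you pair this region against $f$: (i) H\"older against $\bigl(\int f^r\phi_R^m\bigr)^{1/r}$ on the transition annulus gives a prefactor $\sim R^{\,n/r'-\alpha}$, hence $J(R)\le C R^{\,n-\alpha r'}$, which tends to $0$ only in the subcritical range $r<n/(n-\alpha)$; (ii) H\"older against the tail $\|f\|_{L^q(B_R^c)}$ gives a prefactor $\sim R^{\,n/q'-\alpha}$, and the fact that the tail vanishes helps only if that prefactor stays bounded, i.e. $q\le n/(n-\alpha)$. But the lemma is asserted for every $1\le r<\infty$, every $1\le q<\infty$ and every $n$; in the paper's own application $q=r=2$, and for $n\ge 4$ (recall $\alpha<2$) both conditions fail, so neither pairing makes the right-hand side vanish --- it is not even bounded. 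This is not a technicality one can fix by choosing $m$ large or splitting the annuli more cleverly: the rescaled-cutoff/H\"older method is precisely the one that produces critical-exponent restrictions in Liouville theorems, while the lemma claims unconditional vanishing.

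The paper's proof avoids this entirely by a choice of test function that exploits sign rather than decay rates. It takes $v=\phi$ to be the Riesz potential of the cutoff,
$$
\phi(x)=\int_{R^n} g_\alpha(x,y)\,\xi_R(y)\,dy, \qquad (-\Delta)^{\alpha/2}\phi=\xi_R\ge 0,
$$
with $\phi\ge 0$ bounded and decaying like $|x|^{\alpha-n}$. Plugging this $v$ into (\ref{weak}) gives
$$
\int_{R^n} f\,\xi_R+\int_{R^n} f^r\phi\le 0,
$$
a sum of two nonnegative terms, whence $f\equiv 0$ on $B_R$, and then on all of $R^n$ since $R>1$ is arbitrary. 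No H\"older inequality and no rates appear, which is why the conclusion holds for all $q,r,n$; the hypothesis $f\in L^q$ is used only to justify, by a density/limiting argument, that the non-compactly-supported $\phi$ is an admissible test function --- the same tail issue you correctly flagged for $(-\Delta)^{\alpha/2}(\phi_R^m)$, but here it is the only remaining point. If you want to salvage your write-up, replace the cutoff test function by this Riesz potential; the rest of your structure (cutoff at scale $R$, pass $R\to\infty$) then goes through verbatim.
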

\begin{proof}
Let $\xi(x)\in C^{1,1}(B_2(0))$ be the cut-off function such that $\xi(x)=1$ on $B_1(0)$.
For any $R>1$, let $\xi_R(x)=\xi(x/R)$. Then (\ref{weak}) implies that
\[\label{weak1}
\int_{R^n} f(-\Delta)^{\alpha/2}v+\xi_R(x)\int_{R^n}f^{r}v\leq 0,
\]
for any $v\in C_0^\infty(R^n)$ with $v\geq 0$,

Define
$$
\phi(x)=\int_{R^n} g_\alpha(x,y)\xi_R(x)dy.
$$
Then $0\leq \phi(x)\leq C$ for some uniform constant $C>0$, $\phi(x)\leq C|x|^{\alpha-n}$ at infinity and
$$
(-\Delta)^{\alpha/2}\phi(x)=\xi_R(x), \ \ \ on \ \ R^n.
$$
Define, for any $p>1$,
$$
W^{\alpha,p}=\{f\in L^p, (-\Delta)^{\alpha/2}f(x)\in L^p\bigcap L^\infty\}.
$$
Then $C_0^\infty(R^n)$ is dense in $W^{\alpha,p}$. If $q=1$, we choose any $p>1$. If $q>1$, we let $p=\frac{q}{q-1}$. By passing to limit, we can take the test function $v$ in $W^{\alpha,p}$ for the inequality (\ref{weak1}).
In particular, we may let
$v=\phi$ and we have
$$
\int_{R^n}f\xi_R(x) +\xi_R(x)f^{r}\phi(x)\leq 0.
$$
Note that each term in the integration is non-negative. Then we have
$$
f\xi_R(x)=0, \ \ \ a.e. \ \ R^n.
$$
Since $R>1$ is arbitrary, we have
$f(x)=0$ a.e. in $R^n$.
\end{proof}


\begin{thebibliography}{20}
\bibitem{B}
H. Brezis, \emph{Comments on Two Notes by L. Ma and X.
Xu}, C. R. Math. Acad. Sci. Paris  349 (2011),no. 5-6, 269-271

\bibitem{CS}
L.Caffarelli, L. Silvestre, \emph{regularity Theory for fully nonlinear integro-Differential Equations}
Comm. Pure Appl. Math. Vol. LXXII 0597-0638(2009.

\bibitem{CF}
W. Chen, Y. Fang, and R. Yang, Liouville theorems involving the
fractional Laplacian on a half space, Advances in Math. in press, 2014.

\bibitem{CL} W. Chen and C. Li, An integral system and the Lane-Emden conjecture, Disc.  Cont. Dynamics
Sys. 4 (2009), no. 24, 1167-1184.

\bibitem{FLS}
RL. Frank, E. Lenzmann, and L. Silvestre, Uniqueness of radial
solutions for the fractional Laplacian, arXiv: 1302.2652v1, 2013.

\bibitem{La}
N. S. Landkof, Foundations of modern potential theory, Springer-
Verlag Berlin Heidelberg, New York, 1972. Translated from the Russian
by A. P. Doohovskoy, Die Grundlehren der mathematischen Wissenschaften,
Band 180.

\bibitem{M}
Li Ma, On nonlocal nonlinear elliptic problem with the fractional Laplacian, arxiv.org, 2015

\bibitem{M1}
 Li Ma, Liouville type theorem and uniform bound for the Lichnerowicz equation
and the Ginzburg¨CLandau equation, C. R. Acad. Sci. Paris, Ser. I
348 (2010) 993-996

\bibitem{M2}
 Li Ma, Liouville Type Theorems for Lichnerowicz Equations and
Ginzburg-Landau Equation: Survey,Advances in Pure Mathematics, 2011,
1, 99-104

\bibitem{MW}
 Li Ma, Xingwang Xu,uniform bound and a non-existence result
for Lichnerowicz equation in the whole n-space, C.R.Mathematique,
ser.I,347(2009)805-808

\end{thebibliography}
\end{document}